\newcommand*{\myov}[1]{\overbracket[0.65pt][-1pt]{#1}}
\newcommand{\pred}[1]{\chr[#1]}
\newcommand{\nrm}[1]{\left\Vert #1 \right\Vert}
\newcommand{\norm}[1]{\Vert #1 \Vert}
\newcommand{\E}{\mathop{\mathbb{E}}}
\newcommand{\W}{\mathop{\mathbb{W}}}
\renewcommand{\P}{\mathop{\mathbb{P}}}
\newcommand{\R}{\mathbb{R}}
\newcommand{\paren}[1]{\left( #1 \right)}
\newcommand{\sqprn}[1]{\left[ #1 \right]}
\newcommand{\set}[1]{\left\{ #1 \right\}}
\newcommand{\abs}[1]{\left| #1 \right|}
\newcommand{\beq}{\begin{eqnarray*}}
\newcommand{\eeq}{\end{eqnarray*}}
\newcommand{\beqn}{\begin{eqnarray}}
\newcommand{\eeqn}{\end{eqnarray}}
\newcommand{\ben}{\begin{enumerate}}
\newcommand{\een}{\end{enumerate}}
\newcommand{\bit}{\begin{itemize}}
\newcommand{\eit}{\end{itemize}}
\providecommand{\hide}[1]{}
\newcommand{\eps}{\varepsilon}
\newcommand{\floor}[1]{\ensuremath{\left\lfloor#1\right\rfloor}}
\newcommand{\chr}{\boldsymbol{{1}}} 
\newcommand{\X}{\Omega}
\newcommand{\Lip}{\operatorname{Lip}}
\newcommand{\BV}{\operatorname{BV}}
\newcommand{\lip}[1]{\nrm{#1}_{\Lip}}
\newcommand{\fat}{\operatorname{fat}}
\renewcommand{\d}{\mathrm{d}}
\newcommand{\barLs}{{\myov{\Lip}}^{\sf{s}}}
\newcommand{\barLw}{{\myov{\Lip}}^{\sf{w}}}
\newcommand{\La}{\Lambda}
\newcommand{\wa}[1]{\norm{#1}_{\textrm{{\tiny \textup{W}}}}}
\newcommand{\sa}[1]{\norm{#1}_{\textrm{{\tiny \textup{S}}}}}
\newcommand{\Fw}{F_{\textrm{{\tiny \textup{W}}}}}
\newcommand{\Fs}{F_{\textrm{{\tiny \textup{S}}}}}
\renewcommand{\phi}{\varphi}
\begin{document}

\markboth{Ariel Elperin and Aryeh Kontorovich}{Bounded variation separates weak and strong average Lipschitz}

\newtheorem{theorem}{Theorem}[section]
\newtheorem{proposition}{Proposition}
\newtheorem{lemma}{Lemma}
\newtheorem{corollary}{Corollary}
\newtheorem{definition}{Definition}
\newtheorem{remark}{Remark}
\newtheorem{assumption}{Assumption}
\newtheorem{claim}[theorem]{Claim}
\newtheorem{fact}{Fact}[section]

\title{Bounded variation separates weak and strong average Lipschitz}

\author{%
  Ariel Elperin \\
  \texttt{ariel.elperin@gmail.com}
\and
  Aryeh Kontorovich \\
  \texttt{karyeh@cs.bgu.ac.il}
  }
  
\maketitle

\begin{abstract}

We closely examine 
a notion of average smoothness recently introduced by Ashlagi et al. (JMLR, 2024).
The latter defined a {\em weak} and {\em strong}
average-Lipschitz seminorm for real-valued functions on general metric spaces.
Specializing to the standard metric on the real line, we compare these
notions to bounded variation (BV) and discover that the weak notion is strictly
weaker than BV while the strong notion strictly stronger.
Along the way, we discover that the weak average smooth class
is also considerably larger in a certain combinatorial sense,
made precise by the fat-shattering dimension.

\end{abstract}

\section{Introduction}

A function $f:[0,1]\to\R$ is $L$-Lipschitz if $\abs{f(x)-f(x')}\le L\abs{x-x'}$
for all $x,x'\in[0,1]$,
and $\lip{f}$ is the smallest $L$ for which this holds.
If 
$f$ 
has an
integrable derivative, its {\em variation}
$V(f)$ is given by
$V(f)=\int_0^1\abs{f'(x)}\d x$ (the more general definition is given in \eqref{eq:bvdef}).
Since $\abs{f'(x)}\le \lip{f}$, we have the obvious relation
$V(f)\le 
\lip{f}
$. No reverse inequality is possible: since for monotone $f$,
we have $V(f)=|f(0)-f(1)|$
\cite{MR3156940}, a function 
whose value
increases from $0$ to $\eps$ with a sharp ``jump'' in the middle can have $L$ arbitrarily large and $V$ arbitrarily small.

Motivated by questions in machine learning and statistics,
Ashlagi et al.
\cite{AshlagiGK24} introduced 
two notions of ``average Lipschitz'' in general metric probability spaces: a weak one and a strong one.\footnote{Follow-up works extended these results
to average H\"older smoothness
\cite{HannekeKK24,KornowskiHK23}.
}
For the special case of the metric space $\Omega=[0,1]$ equipped with the standard metric $\rho(x,x')=|x-x'|$ and the uniform distribution $U$,
their definitions are as follows.
Both notions rely on the {\em local slope} of 
$f:[0,1]\to\R$ at a point $x$, defined (and denoted)
as follows:
\beqn
\label{eq:lamdef}
\Lambda_f(x) & =& 
\sup\limits_{
x'\in[0,1]\setminus\set{x}
}\frac{|f(x)-f(x')|}{|x-x'|},
\qquad x\in[0,1].
\eeqn
The {\em strong} and 
{\em weak} average smoothness of $f$
are defined, respectively, as
\beq
\label{eq:blas}
\sa{f}
&=& \E
\sqprn{\La_f(X)}
,\\
\label{eq:blaw}
\wa{f}
&=& \W
\sqprn{\La_f(X)}
=
\sup_{t>0}tU\paren{\set{x\in\X:\La_f(x)\ge t}},
\eeq
where $X$ is a random variable distributed according to $U$ on $[0,1]$, $\E$ is the usual expectation, and $\W$
is the {\em weak $L_1$ norm} 
of the random variable $Z$:
\beq
\label{eq:weakmean}
\W[Z]
=
\sup_{t>0}t\P(|Z|\ge t).
\eeq
Both 
$\sa{\cdot}$
and
$\wa{\cdot}$
satisfy the homogeneity axiom of seminorms (meaning that $\nrm{\alpha f}=
|\alpha|\cdot\nrm{f}$),
and 
$\sa{\cdot}$ additionally satisfies the triangle inequality
and hence is a true seminorm. 
The {\em weak $L_1$} norm
satisfies 
the weaker inequality $
\W[X+Y]\le
2(\W[X]+\W[Y])$ \cite{weakL1},
which $\wa{\cdot}$ also inherits.

We now recall
the definition of the variation of $f:[a,b]\to\R$:
\beqn
\label{eq:bvdef}
V_a^b(f)=\sup\limits_{
a
= x_0<x_1<x_2<\ldots<x_n\le 
b
}\sum\limits_{i=1}^n |f(x_i)-f(x_{i-1})|
\label{eq:Vdef}
\eeqn
(when $a=0$ and $b=1$ we omit these),
as well as the Liptschiz and Bounded Variation function classes:
\beq
\label{eq:lipdef}
\Lip &=& \set{f:[0,1]\to\R;~ \lip{f}<\infty}, \\
\label{eq:BV}
\BV &=& \set{f:[0,1]\to\R;~ V(f)<\infty}.
\eeq
The discussion above implies the (well known) strict containment
\beqn
\label{eq:lipbv}
\Lip\subsetneq\BV
.
\eeqn
In addition, we define the strong and weak
average smoothness classes
\beq
\label{eq:slipdef}
\barLs &=& \set{f:[0,1]\to\R;~ \sa{f}<\infty},\\
\label{eq:wlipdef}
\barLw &=& \set{f:[0,1]\to\R;~ \wa{f}<\infty}.
\eeq
By Markov's inequality and the fact that the expectation is bounded by the supremum, we have
\beq
\wa{f}\le\sa{f}\le\sup\limits_{x\in\Omega}\Lambda_f(x)=\lip{f}
\eeq
whence
\beqn
\label{eq:lipsw}
\Lip\subseteq 
\barLs \subseteq \barLw
;
\eeqn
all of these containments were shown to be strict
in \cite{AshlagiGK24}.
The containments in \eqref{eq:lipbv} and \eqref{eq:lipsw}
leave open the relation between $\BV$ and $
\barLs , \barLw
$, which 
we resolve in this work:
\begin{theorem}
\label{thm:main}
$\barLs\subsetneq \BV\subsetneq \barLw$.
\end{theorem}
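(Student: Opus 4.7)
The plan is to split Theorem~\ref{thm:main} into its four sub-claims (two inclusions and two strict separations) and prove the inclusions via the quantitative estimates $V(f) \le \sa{f}$ and $\wa{f} \le 3\,V(f)$, and the strictness claims via explicit witness functions.

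For $\barLs \subseteq \BV$, the key step is that for any partition $0 = x_0 < \cdots < x_n = 1$ and any $y \in [x_{i-1}, x_i]$, the triangle inequality together with the definition of $\Lambda_f$ gives $|f(x_i) - f(x_{i-1})| \le \Lambda_f(y)(x_i - x_{i-1})$. Averaging over $y$ in the subinterval yields $|f(x_i) - f(x_{i-1})| \le \int_{x_{i-1}}^{x_i} \Lambda_f(y)\,\d y$, and summing over $i$ and taking the supremum over partitions gives $V(f) \le \sa{f}$. Strictness is witnessed by $f = \chr[x \ge 1/2]$, for which $V(f) = 1$ but $\Lambda_f(x) = 1/|x - 1/2|$ is not Lebesgue integrable, so $\sa{f} = \infty$.

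For $\BV \subseteq \barLw$, fix $t > 0$ and $A_t = \{x : \Lambda_f(x) \ge t\}$. For each $x \in A_t$ and each $\epsilon > 0$, an $\epsilon$-approximation of the supremum selects some $x'(x) \in [0,1]$ with $|f(x) - f(x'(x))| \ge (t - \epsilon)|x - x'(x)|$; let $I_x$ be the closed interval spanned by $x$ and $x'(x)$. The one-dimensional Vitali covering lemma extracts a disjoint subfamily $\{I_{x_j}\}$ with $A_t \subseteq \bigcup_j 3 I_{x_j}$, so $|A_t| \le 3 \sum_j |I_{x_j}|$; the lower bound $(t-\epsilon)|I_{x_j}| \le |f(\text{endpts of } I_{x_j})|$ combined with the disjoint-intervals estimate $\sum_j |f(\text{endpts of } I_{x_j})| \le V(f)$ gives $|A_t| \le 3V(f)/(t - \epsilon)$, and sending $\epsilon \to 0$ yields $\wa{f} \le 3V(f)$. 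For strictness, I would build $f$ as a sum of disjoint triangular bumps, the $n$-th of peak height $1/n$ supported on $[1/(n+1),1/n]$: then $V(f) = 2\sum_n 1/n = \infty$, while a direct calculation shows $\Lambda_f = \Theta(n)$ on the $n$-th bump (its local slope is $2(n+1)$, and slopes to peaks of adjacent bumps remain $O(n)$ since consecutive peaks are separated by $\Theta(1/n^2)$, whereas slopes to far peaks are $O(1)$). Hence $\{\Lambda_f \ge t\} \subseteq \bigcup_{n \gtrsim t}[1/(n+1),1/n]$ has total measure $O(1/t)$, giving $\wa{f} < \infty$.

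The main obstacle is the Vitali step in $\BV \subseteq \barLw$: extracting a disjoint subfamily that captures a constant fraction of $|A_t|$ and cleanly bounding the sum of $|f|$-differences on disjoint subintervals by $V(f)$ are standard moves, but they require careful combination with the $\epsilon$-approximation since the supremum defining $\Lambda_f(x)$ need not be attained. A secondary care point is the bump example, where one must verify that $\Lambda_f$ truly remains $O(n)$ on the $n$-th bump despite nearby bumps of comparable height; this is where the quadratic spacing of the support intervals is essential.
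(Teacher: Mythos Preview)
Your proposal is correct, and for the inclusion $V(f)\le\sa{f}$ and the first strictness (the step function) it matches the paper exactly. The genuine divergence is in proving $\wa{f}\le C\,V(f)$. The paper first reduces to right-continuous monotone $f$ via the variation function $T_f(x)=V_0^x(f)$, builds the Lebesgue--Stieltjes measure $\mu_f$, introduces the uncentered maximal function $M_f\ge\Lambda_f$, and then applies a bespoke covering lemma (their Lemma~\ref{cover}) to bound $m(\{M_f>t\})$ with constant $2$. You bypass all of this: you work directly with the witness intervals $I_x=[x\wedge x'(x),\,x\vee x'(x)]$, apply the Vitali covering lemma to extract a disjoint subfamily, and bound $\sum_j|f(b_j)-f(a_j)|\le V(f)$ using nothing more than the definition of variation on disjoint subintervals. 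Your route is more elementary and self-contained (no measure theory, no monotone reduction), at the cost of a slightly worse constant ($3$ versus the paper's $2$); the paper's route connects the problem to the classical Hardy--Littlewood maximal inequality. For the second strictness, your triangular-bump example (height $1/n$ on $[1/(n+1),1/n]$) is a cousin of the paper's zigzag $f(1/n)=\sum_{k\le n}(-1)^{k+1}/k$; both rely on the same mechanism that the slope blows up only as $x\to0$, so $\{\Lambda_f\ge t\}\subseteq[0,O(1/t)]$.
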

We also provide a quantitative, finitary relation between these clases:
\begin{theorem}\label{thm:containment}
For any $f:[0,1]\to\R$, we have
$
\frac{1}{2}\wa{f}\le V(f)\le \sa{f}.
$
\end{theorem}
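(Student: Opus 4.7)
The plan is to prove the two inequalities in Theorem \ref{thm:containment} separately. For the right inequality $V(f)\le\sa{f}$, the key pointwise observation is that for any $0\le a<b\le 1$ and any $x\in[a,b]$, the triangle inequality combined with the definition \eqref{eq:lamdef} of $\Lambda_f$ yields
\[
|f(b)-f(a)|\le|f(b)-f(x)|+|f(x)-f(a)|\le\Lambda_f(x)(b-x)+\Lambda_f(x)(x-a)=\Lambda_f(x)(b-a).
\]
Hence $\Lambda_f(x)\ge|f(b)-f(a)|/(b-a)$ for every $x\in[a,b]$, so integrating over $[a,b]$ gives $|f(b)-f(a)|\le\int_a^b\Lambda_f(x)\,\diff x$. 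Summing this estimate over any partition $0=x_0<\cdots<x_n\le 1$ and using additivity of the integral yields $\sum_i|f(x_i)-f(x_{i-1})|\le\int_0^1\Lambda_f(x)\,\diff x$; taking the supremum over partitions gives $V(f)\le\int_0^1\Lambda_f(x)\,\diff x=\sa{f}$.

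For the left inequality $\tfrac{1}{2}\wa{f}\le V(f)$, recall from \eqref{eq:blaw} that $\wa{f}=\sup_{t>0}t\,U(A_t)$ with $A_t=\{x\in[0,1]:\Lambda_f(x)\ge t\}$, so it suffices to prove $t\,U(A_t)\le 2V(f)$ for each fixed $t>0$ (the case $V(f)=\infty$ being trivial, we may assume $f\in\BV$). Decompose $A_t=B^+\cup B^-$, where $B^\pm$ consists of $x\in A_t$ at which the supremum in $\Lambda_f(x)$ is (approximately) realized by a witness $x'(x)$ lying to the right (resp.\ left) of $x$; by subadditivity $U(A_t)\le U(B^+)+U(B^-)$, so it is enough to establish $V(f)\ge t\,U(B^+)$, the case of $B^-$ being symmetric. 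Fix $\delta>0$ and for each $x\in B^+$ choose $x'(x)>x$ with $|f(x'(x))-f(x)|\ge(t-\delta)(x'(x)-x)$. The family $\{[x,x'(x)]:x\in B^+\}$ is then a cover of $B^+$ by right-pointing intervals on each of which $f$ accumulates variation at least $(t-\delta)$ times its length. A one-dimensional leftmost-first greedy selection extracts a countable pairwise-disjoint sub-family $\{[x_k,x'_k]\}$ whose union covers $B^+$ (up to a null set), and using the endpoints as partition points yields
\[
V(f)\ge\sum_k|f(x'_k)-f(x_k)|\ge(t-\delta)\sum_k(x'_k-x_k)\ge(t-\delta)\,U(B^+).
\]
Letting $\delta\downarrow 0$ gives $V(f)\ge t\,U(B^+)$, completing the proof modulo the covering step.

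The hard part will be justifying the greedy covering, since $B^+$ may be an arbitrary measurable set and the witnesses $x'(x)$ may be macroscopic, so a standard Vitali-type argument does not apply. The cleanest route is to first pass, by inner regularity of Lebesgue measure, to a compact $K\subseteq B^+$ with $U(K)\ge U(B^+)-\eps$, and then run the greedy scheme (transfinitely if needed) on $K$: at stage $\alpha$, set $x_\alpha=\min(K\setminus\bigcup_{\beta<\alpha}[x_\beta,x'_\beta])$ and $x'_\alpha=x'(x_\alpha)$ whenever the remainder is non-empty. The chosen intervals are pairwise disjoint and lie in $[0,1]$, so the process selects at most countably many of them, and the leftmost-first rule ensures $K\subseteq\bigcup_\alpha[x_\alpha,x'_\alpha]$ at termination. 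A secondary technical point is the measurability of $B^\pm$: since a BV function has at most countably many discontinuities, the supremum defining $\Lambda_f$ can be taken over a countable set of witnesses (rationals together with the discontinuity set), making all the sets in question Borel.
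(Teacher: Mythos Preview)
Your proof of $V(f)\le\sa{f}$ matches the paper's: both observe that $\Lambda_f(x)\ge|f(b)-f(a)|/(b-a)$ for every $x\in[a,b]$ and integrate over the cells of a partition.

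For $\wa{f}\le 2V(f)$ your route differs genuinely from the paper's. The paper first reduces to right-continuous monotone $f$ (via $T_f(x)=V_0^x(f)$, Lemma~\ref{right_continuity}), forms the Lebesgue--Stieltjes measure $\mu_f$, and bounds the uncentered maximal function $M_f\ge\Lambda_f$ using a covering lemma (Lemma~\ref{cover}) that extracts a \emph{disjoint} subfamily of intervals carrying at least half the total length. You instead split $A_t$ by the direction of the witness and run a leftmost-first greedy selection directly on a compact $K\subseteq B^+$; the factor $2$ comes from the decomposition $A_t=B^+\cup B^-$ rather than from the covering constant. Your approach is more elementary---no Stieltjes measure, no reduction to monotone right-continuous functions---and the one-sidedness of the intervals is precisely what allows an exact greedy cover rather than a Vitali-type argument with loss. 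The paper's route, on the other hand, plugs into standard maximal-function machinery and makes the connection to classical harmonic analysis explicit. One technical point to tighten in your scheme: $K\setminus\bigcup_{\beta<\alpha}[x_\beta,x'_\beta]$ is relatively open in $K$ and need not attain its minimum (e.g.\ $K=[0,1]$, $[x_0,x'_0]=[0,\tfrac12]$ leaves $(\tfrac12,1]$). The fix is to take $x_\alpha=\min\{x\in K:x\ge\sup_{\beta<\alpha}x'_\beta\}$, which is the minimum of a compact set; the selected intervals may then share endpoints but remain non-overlapping, which suffices for both the variation estimate $\sum_\alpha|f(x'_\alpha)-f(x_\alpha)|\le V(f)$ and the measure estimate $\sum_\alpha(x'_\alpha-x_\alpha)\ge m(K)$.
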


Finally, we recall the definition of the {\em fat-shattering dimension}, a combinatorial complexity measure
of function classes
of central importance in statistics, empirical processes, and machine learning
\cite{alon97scalesensitive,DBLP:journals/jcss/BartlettL98}.
Let $F$ be a collection of functions mapping $[0,1]$ to $\R$.
For $\gamma>0$,
a set $S=\set{x_1,\ldots,x_m}\subset[0,1]$
is said to be $\gamma$-shattered by
$F$
if 
\beqn
\label{eq:gamma-shatter}
\sup_{r\in\R^m}
\;
\min_{y\in\set{-1,1}^m}
\;
\sup_{f\in F}
\;
\min_{i\in[m]}
\;
y_i(f(x_i)-r_i)\ge \gamma.
\eeqn
The $\gamma$-fat-shattering dimension, denoted by $\fat_\gamma(F)$,
is the size of the largest $\gamma$-shattered set (possibly $\infty$).
It is known \cite{MR1741038} that for 
\( F = \{ f : [0,1] \to \mathbb{R} \mid V(f) \le L \} \), we have 
\( \fat_\gamma(F) = 1 + \left\lfloor \frac{L}{2\gamma} \right\rfloor \).
This same bound holds for 
\( F = \{ f : [0,1] \to \mathbb{R} \mid \Lip{f} \le L \} \).

Although the strong smoothness class has the same combinatorial complexity as the BV and Lipschitz classes,
for
weak average smoothness 
this quantity
turns out to to be considerably greater:
\begin{theorem}
\label{thm:averages-fat}
For $L>0$,
let $\Fw=
\set{
f:[0,1]\to\mathbb{R};
\wa{f}\le L
}
$ and $\Fs=\set{
f:[0,1]\to\mathbb{R};
\sa{f}\le L
}$.
Then:

\begin{enumerate}
    \item $\fat_\gamma(\Fw)=\infty$ whenever $\gamma\le\frac{L}{6}$
    \item $\fat_\gamma(\Fs)=1+\floor{\frac{L}{2\gamma}}$
    for $\gamma>0$.
\end{enumerate}
\end{theorem}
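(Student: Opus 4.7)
The plan splits according to the two claims.

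For Part 2, I would sandwich $\Fs$ between the Lipschitz and BV classes, both of which already have the target fat-shattering dimension $1+\lfloor L/(2\gamma)\rfloor$ by the cited result \cite{MR1741038}. From $\sa{f}\le\lip{f}$ in \eqref{eq:lipsw} we get $\{f:\lip{f}\le L\}\subseteq\Fs$, and from Theorem~\ref{thm:containment}'s inequality $V(f)\le\sa{f}$ we get $\Fs\subseteq\{f:V(f)\le L\}$. Sandwiching yields the claim.

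For Part 1, I would construct explicit witnesses for each $m\in\N$. Fix a geometric ratio $q\in(0,1)$ (e.g.\ $q=2/3$) and place points $x_i=1-q^i$ for $i=1,\ldots,m$, clustering near $1$ with gaps $x_{i+1}-x_i=(1-q)q^i$. For each labeling $y\in\{-1,+1\}^m$, let $f_y:[0,1]\to\R$ be the piecewise linear interpolant of $(x_i,y_i\gamma)$, extended as the constant $y_1\gamma$ on $[0,x_1]$ and $y_m\gamma$ on $[x_m,1]$. Then $f_y(x_i)=y_i\gamma$, so $\{x_1,\ldots,x_m\}$ is $\gamma$-shattered (taking $r_i=0$ in \eqref{eq:gamma-shatter}) provided each $f_y\in\Fw$. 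Since $m$ was arbitrary, confirming $\wa{f_y}\le L$ yields $\fat_\gamma(\Fw)=\infty$.

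The heart of the argument is thus the bound $\wa{f_y}\le 6\gamma$. In the worst case (alternating $y$), the slope on piece $(x_i,x_{i+1})$ has magnitude $s_i=2\gamma/[(1-q)q^i]$, growing geometrically with $i$. A direct computation of $\Lambda_{f_y}(x)$---comparing $x$ against $x'$ in the same piece, in the adjacent pieces, and in the two constant tails---shows that for $x$ in piece $i$, $\Lambda_{f_y}(x)$ is essentially dominated by $s_i$. Hence the super-level set $\{\Lambda_{f_y}\ge s_k\}$ is contained in $\bigcup_{j\ge k}(x_j,x_{j+1})$, whose total width is the geometric sum $(1-q)\sum_{j\ge k}q^j=q^k$. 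The cancellation $s_k\cdot q^k=2\gamma/(1-q)$ is independent of $k$ and of $m$, and with $q=2/3$ gives $\wa{f_y}\le 2\gamma/(1-q)=6\gamma$.

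The main obstacle is the sharp constant $6$: one must verify that the contributions at intermediate thresholds $t\in(s_{k-1},s_k)$ and from the two constant tails (where $f_y$ is flat but $\Lambda_{f_y}$ is still bounded by $2\gamma$ divided by distance to the nearest opposite-valued region) add only lower-order terms. This requires careful bookkeeping but no new ideas; the geometric decay of the piece widths is precisely what prevents $\wa{f_y}$ from growing with $m$, in contrast to $\BV$ or $\Fs$ where an analogous piecewise construction would force the norm to scale linearly in $m$.
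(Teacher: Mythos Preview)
Your proposal is correct and follows the paper's approach closely for both parts, with minor differences in execution.

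For Part~2, the paper does exactly your sandwich: the upper bound uses $V(f)\le\sa{f}$ from Theorem~\ref{thm:containment} together with the known BV fat-shattering value, and the lower bound is the Lipschitz packing construction (which the paper spells out explicitly rather than citing).

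For Part~1, the paper also uses geometrically spaced nodes and piecewise linear oscillations between $\pm\gamma$, but with two differences worth noting. First, it takes the dyadic sequence $x_n=2^{-n}$ and a single \emph{infinite} piecewise linear function, so there are no constant tails at all; this even yields a $\gamma$-shattered infinite set, stronger than what the theorem asks. Second, the paper's bound $\wa{f}\le 6\gamma$ is not obtained via the cancellation $s_k\cdot q^k=2\gamma/(1-q)$ you outline, but through a direct pointwise containment: if $\Lambda_f(x)\ge t$ then $|x-x'|\le 2\gamma/t$ and one of $x,x'$ lies in a piece of slope at least $t$, hence at position at most $4\gamma/t$, forcing $x\le 6\gamma/t$. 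This bypasses precisely the intermediate-threshold and tail bookkeeping you flag as the main obstacle and delivers the constant $6$ cleanly. Your finite construction would also work, but be warned that the tail contributions are not obviously lower-order with $q=2/3$; you may need to adjust $q$ or argue more carefully to recover the exact constant $6$ rather than something slightly larger.
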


\paragraph{Notation.}
We write $[n]:=\set{1,\ldots,n}$
and use $m(\cdot)$ to denote the Lebesgue measure
(length) of sets in $\R$.

\section{Proofs}

We begin with a variant of the standard covering lemma.
\begin{lemma}\label{cover}
For any sequence $s_1,\ldots,s_n$ of closed segments in $\R$,
there is a subsequence indexed by $I\subseteq [n]$ such that for all distinct $i,j\in I$ we have $s_i\cap s_j=\emptyset$ and $\sum\limits_{i\in I}m(s_i)\ge\frac{1}{2}m\left(\bigcup\limits_{i=1}^n s_i\right)$.
\end{lemma}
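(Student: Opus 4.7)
The plan is to reduce to a single connected component of the union and then run a greedy chain-cover with an odd/even parity argument. Let $U := \bigcup_{i=1}^n s_i$; since $U$ is a finite union of closed intervals it decomposes into pairwise-disjoint connected components $J_1,\ldots,J_K$, each a closed interval, and each $s_i$ is entirely contained in one $J_k$. It therefore suffices to prove that, within the family of segments contained in a single component $J$, one can select a pairwise-disjoint subfamily whose total length is at least $\tfrac{1}{2}m(J)$; taking the union of these subfamilies over all components then yields the required index set $I$.

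Fix such a $J$ and restrict attention to the segments inside it. Construct a chain $t_1,t_2,\ldots$ greedily: let $t_1$ be any of these segments whose left endpoint equals $\min J$, and, having chosen $t_k=[a_k,b_k]$ with $b_k<\max J$, let $t_{k+1}=[a_{k+1},b_{k+1}]$ be a not-yet-chosen segment that meets $t_k$, satisfies $b_{k+1}>b_k$, and whose right endpoint is maximal subject to these constraints (ties broken arbitrarily). The process is well-defined because $b_k<\max J$ forces the existence of some segment in $J$ crossing $b_k$; it terminates in a finite chain $t_1,\ldots,t_M$ whose union equals $J$.

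The crucial observation is that $t_k\cap t_{k+2}=\emptyset$ for every $k$. Indeed, by construction $b_{k+2}>b_{k+1}>b_k$; if additionally $t_{k+2}$ met $t_k$, then $t_{k+2}$ would have been an admissible candidate at step $k{+}1$ (it meets $t_k$ and has right endpoint exceeding $b_k$), contradicting the maximality that produced $t_{k+1}$. Hence the odd-indexed segments $t_1,t_3,\ldots$ are pairwise disjoint, as are the even-indexed segments $t_2,t_4,\ldots$. Since the union of all $t_k$ covers $J$,
\[
\sum_{k\ \text{odd}} m(t_k)\;+\;\sum_{k\ \text{even}} m(t_k)\;\ge\;m(J),
\]
so at least one of the two parity classes has total length $\ge \tfrac12 m(J)$; keep that one as the subfamily for $J$.

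The main obstacle is precisely the two-step non-intersection property: without it one only recovers the standard Vitali-type factor of $\tfrac13$, and the target constant $\tfrac12$ is exactly what the parity trick buys. The remaining details—well-definedness of each greedy step, the fact that the chain eventually reaches $\max J$, and concatenating the per-component families into a single global index set $I$—are routine bookkeeping.
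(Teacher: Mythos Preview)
Your argument is correct. The greedy chain is well-defined (whenever $b_k<\max J$, connectedness of $J$ and finiteness of the family force some fresh segment to straddle $b_k$), and your key step---that $t_{k+2}$ cannot meet $t_k$ by maximality of $b_{k+1}$---is sound. One point you leave implicit but should perhaps spell out: from $t_k\cap t_{k+2}=\emptyset$ together with $b_{k+2}>b_k$ you get $a_{k+2}>b_k$, so $t_{k+2}$ lies strictly to the right of $t_k$; by transitivity this upgrades ``non-adjacent in the chain'' to genuine pairwise disjointness within each parity class.

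The paper's proof is different in flavor: it inducts on $n$ via the intersection graph of the segments, arguing that if the graph has a cycle then one segment is redundant (covered by the other two on the cycle) and can be deleted, while if the graph is acyclic it is a forest, hence bipartite, and one of the two color classes furnishes a disjoint subfamily of at least half the total measure. Both arguments ultimately exploit the same one-dimensional phenomenon---a minimal interval cover is $2$-colorable with disjoint color classes---but yours reaches it by an explicit left-to-right sweep and parity split, whereas the paper reaches it by pruning to an acyclic intersection graph and invoking bipartiteness. Your route is more constructive and self-contained; the paper's is shorter once one is willing to quote the forest-implies-bipartite fact.
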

\begin{proof}
We proceed by
induction on $n$. 
Let $G=([n],E)$ denote the intersection graph of the $s_i$:
the vertices correspond to the segments and $(i,j)\in E$ iff
$s_i\cap s_j\neq\emptyset$.

Suppose that $G$ contains a cycle, and let $s_1=[a_1,b_1],\ldots,s_k=[a_k,b_k]$ be the segments in the cycle sorted by their right endpoint. Since $s_1\cap s_k\neq\emptyset$, we have $a_k\le b_1$. If $a_{k-1}\ge a_1$ then $s_{k-1}\subseteq s_1\cup s_k$. Otherwise, $a_{k-1}<a_1$ and $s_1\subseteq s_{k-1}$. Either way, we have found a segment that is completely covered by the other vertices of $G$. After removing it we obtain $I\subseteq[n]$ of size $n-1$ with $\bigcup\limits_{i\in I}s_i=\bigcup\limits_{i=1}^n s_i$, so applying the inductive hypothesis on the segments in $I$ yields the desired result. If $G$ does not contain a cycle, then $G=A\cup B$ is bipartite, where $A, B\subseteq[n]$ are disjoint and nonempty.
Clearly $m\left(\bigcup\limits_{i=1}^n s_i\right)\le m\left(\bigcup\limits_{i\in A}s_i\right)+m\left(\bigcup\limits_{i\in B}s_i\right)$, and thus $\max\left(m\left(\bigcup\limits_{i\in A}s_i\right) ,m\left(\bigcup\limits_{i\in B}s_i\right)\right)\ge \frac {1}{2}m\left(\bigcup\limits_{i=1}^n s_i\right)$, so taking either $I=A$ or $I=B$ (which is possible since the segments inside each part are disjoint) yields the desired result.
\end{proof}

Next, we reduce the proof of Theorem~\ref{thm:containment}
to the case of right-continuous monotone functions.
\begin{lemma}\label{right_continuity}
If for every right-continuous monotone function $f:[0,1]\rightarrow\mathbb{R}$ we have $\wa{f}\le 2V(f)$ then the bound holds for all $f:[0,1]\rightarrow\mathbb{R}$. 
Furthermore, both inequalities are tight.
\end{lemma}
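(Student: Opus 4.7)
I would prove the reduction and the tightness claim separately. For the reduction, assume $V(f)<\infty$ (otherwise the bound is vacuous) and pass from $f$ to a right-continuous non-decreasing function in two stages. First, let $v(x):=V_0^x(f)$ denote the variation function. It is non-decreasing with $v(0)=0$ and $v(1)=V(f)$, so $V(v)=V(f)$. Since $|f(x)-f(x')|\le V_{\min(x,x')}^{\max(x,x')}(f)=|v(x)-v(x')|$, we have $\Lambda_f(x)\le\Lambda_v(x)$ pointwise, and therefore $\wa{f}\le\wa{v}$. Second, define the right-continuification $\tilde v(x):=v(x^+)$ for $x\in[0,1)$ and $\tilde v(1):=v(1)$. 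Then $\tilde v$ is right-continuous and non-decreasing, it agrees with $v$ outside the countable discontinuity set $D$ of $v$, and
\[
V(\tilde v)=\tilde v(1)-\tilde v(0)=v(1)-v(0^+)\le v(1)-v(0)=V(v).
\]
Applying the assumed bound to $\tilde v$ and chaining,
\[
\wa{f}\le \wa{v} = \wa{\tilde v}\le 2V(\tilde v)\le 2V(v)=2V(f).
\]

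\textbf{Main obstacle.} The middle equality $\wa{v}=\wa{\tilde v}$ is the delicate step, since $\Lambda$ is a supremum over $x'$ and therefore potentially sensitive to values on the countable set $D$. I would prove $\Lambda_v(x)=\Lambda_{\tilde v}(x)$ for every $x\notin D$ by showing that in each supremum, any $x'\in D$ can be replaced by a nearby point outside $D$ approached from the appropriate side without losing ratio: for $x'\in D$ with $x'<x$, monotonicity together with the sandwich $v(x')\in[v(x'^-),v(x'^+)]$ gives
\[
\lim_{\varepsilon\downarrow 0,\; x'-\varepsilon\notin D}\frac{v(x)-v(x'-\varepsilon)}{x-x'+\varepsilon}=\frac{v(x)-v(x'^-)}{x-x'}\ge\frac{v(x)-v(x')}{x-x'},
\]
and a symmetric argument handles $x'>x$. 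The same reasoning applies to $\tilde v$, so both $\Lambda_v(x)$ and $\Lambda_{\tilde v}(x)$ equal the supremum of $|v(x)-v(x')|/|x-x'|$ restricted to $x'\notin D$; since $v=\tilde v$ off $D$ and $x\notin D$, these two suprema coincide. As $m(D)=0$, it follows that $\wa{v}=\wa{\tilde v}$.

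\textbf{Tightness.} For $\wa{f}\le 2V(f)$, take the step function $f:=\pred{x>1/2}$. Then $V(f)=1$, a direct computation gives $\Lambda_f(x)=1/|x-1/2|$, and
\[
\wa{f}=\sup_{t>0}\,t\cdot m\paren{\set{x\in[0,1]:|x-1/2|\le 1/t}}=\sup_{t\ge 2}t\cdot\tfrac{2}{t}=2,
\]
so $\wa{f}=2V(f)$. For the companion inequality $V(f)\le\sa{f}$, the identity $f(x)=x$ has $\Lambda_f\equiv 1$ and $V(f)=1=\sa{f}$. Thus both inequalities of Theorem~\ref{thm:containment} are sharp, and neither constant can be improved.
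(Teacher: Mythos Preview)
Your two-stage reduction (pass to the variation function $v$, then right-continuify to $\tilde v$) is the same route the paper takes, and your tightness examples coincide with the paper's. However, your ``main obstacle'' argument has a genuine gap at the left endpoint.

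You claim $\Lambda_v(x)=\Lambda_{\tilde v}(x)$ for every $x\notin D$ by replacing each $x'\in D$ with a nearby $x'\pm\varepsilon\notin D$. For $x'<x$ you approach from the left, which is impossible when $x'=0\in D$; approaching from the right instead gives $\frac{v(x)-v(0^+)}{x}\le\frac{v(x)-v(0)}{x}$, the wrong direction. So when $v(0^+)>v(0)=0$ the equality can fail, and with it your chain. A concrete counterexample: take $f=\chr[x>0]$. Then $v=f$, $\tilde v\equiv 1$, so $\Lambda_{\tilde v}\equiv 0$, $\wa{\tilde v}=0$, $V(\tilde v)=0$, while $\Lambda_v(x)=1/x$ and $\wa{v}=1$. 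Your displayed chain $\wa{v}=\wa{\tilde v}\le 2V(\tilde v)$ would read $1=0\le 0$.

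The paper isolates precisely this case: when $0$ is a right-discontinuity it extends the right-continuified function to $[-\varepsilon,1]$ by the constant value $f(0)$ on $[-\varepsilon,0)$, applies the hypothesis there, and lets $\varepsilon\to 0$. With that patch (or any equivalent device restoring the mass $v(0^+)-v(0)$), your argument goes through; without it the reduction is incomplete.
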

\begin{proof}
We begin by observing that we can restrict our attention to monotone functions, since $T_f(x)=V_f([0,x])$ is monotone and has the same variation as $f$, but $
\Lambda_{T_f}(x)=
\sup\limits_{x'\neq x}\frac{|T_f(x)-T_f(x')|}{|x-x'|}
\ge
\sup\limits_{x'\neq x}\frac{|f(x)-f(x')|}{|x-x'|}$,
which means $\wa{T_f}\ge \wa{f}$. 

Thus $\wa{T_f}\le 2V(T_f)$ immediately implies $\wa{f}\le\wa{T_f}\le 2V(T_f)=2V(f)$. If $f$ is monotone, it can only have jump discontinuities. Let $I\subset[0,1]$ denote the set of right discontinuities of $f$. Note that since $f$ is monotone, $I$ is at most countable. Define the modified version of $f$ to be:
\begin{align}
\widetilde{f}(x)=\begin{cases} f(x) & x\notin I \\ \lim_{\eps\rightarrow 0^+}f(x+\eps) & x\in I
.
\end{cases}
\end{align}
Note that
$\widetilde{f}$ is monotone and right-continuous. It is not hard to see that if $0\notin I$ then $V(\widetilde{f})=V(f)$ and $\Lambda_{\widetilde{f}}(x)=\Lambda_f(x)$ for all $x\notin I$, which implies $\wa{f}=\wa{\widetilde{f}}$ and allows us to restrict our discussion to right-continuous functions. If $0\in I$, then we can extend the domain of $\widetilde{f}$ to $[-\eps,1]$ for all $\eps>0$, where $\widetilde{f}(x)=f(0)$ for all $x<0$. Denote the extended function by $\widetilde{f}_\eps$, then since $\wa{f}=\lim_{\eps\rightarrow 0}\wa{\widetilde{f}_\eps}$ and $V(\widetilde{f}_\eps)=V(f)$ for all $\eps>0$, we can conclude that
\beq
\wa{f}=\lim_{\eps\rightarrow 0}\wa{\widetilde{f}_\eps}\le \lim_{\eps\rightarrow 0}2V(\widetilde{f}_\eps)=2V(f).
\eeq
\end{proof}

\subsection{Proof of Theorem~\ref{thm:containment}}

We first show that $\wa{f}\le 2V(f)$. 
We may assume without loss of generality that $V(f)<\infty$.
Since $f$ is of bounded variation, the function $T_f(x)=V_f([0,x])$ is well-defined for $x>0$. By Lemma \ref{right_continuity}, we may assume without loss of generality that $f$ is right-continuous.
Thus,
$T_f:[0,1]\rightarrow\mathbb{R}$ is monotone and right-continuous and thus induces a Lebesgue–Stieltjes measure on $[0,1]$, which we denote by $\mu_f$. We now define the {\em maximal} function $M_{f}:[0,1]\rightarrow\mathbb{R}$ as follows:
\begin{align}
M_f(x)=\sup\limits_{r_1,r_2>0}\frac{\mu_f([x-r_1,x+r_2])}{r_1+r_2}=\sup\limits_{r_1,r_2>0}\frac{V_f([x-r_1,x+r_2])}{r_1+r_2},
\end{align}
where
the segments $[a,x],[x,b]$ are taken to be $[0,x],[x,1]$, respectively, whenever $a<0$ or $b>1$. 
A standard argument shows that
$M_f^{-1}\left((t,\infty)\right)$ is open, whence $M_f$ is measurable.

We now observe that $M_f\ge \Lambda_f$ everywhere in $[0,1]$. Indeed, if $x'>x$ then 
$M_f(x)\ge\frac{V_f([x-\eps,x'])}{\eps+(x'-x)}\ge\frac{|f(x')-f(x)|}{x'-x+\eps}$
holds
for $ \eps>0$,
and hence
$M_f(x)\ge\sup\limits_{x'>x} \frac{|f(x')-f(x)|}{x'-x}$. The case of $x'<x$ is completely analogous,
whence 
$M_f(x)\ge \sup\limits_{x'\neq x}\frac{|f(x)-f(x')|}{|x-x'|}=\Lambda_f(x)$. 
For $X$ uniformly distributed over $[0,1]$ we have $\P\left(\Lambda_f(X)\ge t\right)\le 
\P\left(M_f(X)\ge t\right)$ and showing $\wa{f}\le 2V(f)$ reduces to bounding the latter probability by ${2V(f)}/{t}$.

We now closely follow the proof of Theorem 7.4 in \cite{Rudin}, and bound $m\left(M_f^{-1}\left((t,\infty)\right)\right)$ by bounding $m(K)$ for arbitrary compact $K\subseteq M_f^{-1}\left((t,\infty)\right)$. For $x\in K\subseteq M_f^{-1}\left((t,\infty)\right)$, denote by $r_1(x),r_2(x)$ some lengths such that $\frac{\mu_f\left([x-r_1(x),x+r_2(x)]\right)}{r_1(x)+r_2(x)}\ge t$. Denote by $S_x$ the open interval $(x-r_1(x),x+r_2(x))$, then clearly $K\subseteq\bigcup\limits_{x\in K}S_x$. Since $K$ is compact, a finite cover by intervals $S_{x_1},..., S_{x_n}$ exists. By Lemma \ref{cover} there exists $I\subseteq [n]$ such that for all distinct $i,j\in I$ we have $S_{x_i}\cap S_{x_j}=\emptyset$ and $\sum\limits_{i\in I}m(S_{x_i})\ge \frac{1}{2}m\left(\bigcup\limits_{i=1}^n S_{x_i}\right)$. Finally, by definition of the $S_x$'s, for each $i\in[n]$ it holds that $m(S_{x_i})\le \frac{\mu_f(S_{x_i})}{t}$. We can now write
\begin{align}
m(K)\le m\left(\bigcup\limits_{i=1}^n S_{x_i}\right)\le 2\sum\limits_{i\in I}m(S_{x_i})\le\frac{2}{t}\sum\limits_{i\in I}\mu_f(S_{x_i})\le\frac{2}{t}\mu_f([0,1]),
\end{align}
where the last inequality holds since the intervals in $I$ are disjoint. 
Since $\mu_f([0,1])=V(f)$,
it immediately follows that
$\wa{f}\le 2V(f)$.

It remains to show that $V(f)\le \sa{f}$. Let us denote by $P_n$ the partition $0\le x_1< x_2<...< x_n\le 1$ of $[0,1]$, and let $V(P_n)=\sum\limits_{i=1}^{n-1}|f(x_{i+1})-f(x_i)|$ denote the variation of $f$ relative to $P_n$. It suffices to show that for any such partition $P_n$ we have $\sa{f}\ge V(P_n)$.
Now
\begin{align}
\label{strong}
\sa{f}=\mathbb{E}\left[\Lambda_f(X)\right]\ge\sum\limits_{i=1}^{n-1}|x_{i+1}-x_i|\mathbb{E}\big[\Lambda_f(X)|X\in [x_i,x_{i+1}]\big].
\end{align}

Note that for all $x\in[x_i,x_{i+1}]$ we have:

\begin{align}
\Lambda_f(x)\ge\max\left(\frac{|f(x)-f(x_i)|}{x-x_i},\frac{|f(x_{i+1})-f(x)|}{x_{i+1}-x}\right)\ge \frac{|f(x_{i+1})-f(x_i)|}{x_{i+1}-x_i}
.
\end{align}
Applying this to \eqref{strong} yields
$$
\mathbb{E}\left[\Lambda_f\right]\ge\sum\limits_{i=1}^{n-1}|x_{i+1}-x_i|\frac{|f(x_{i+1})-f(x_i)|}{x_{i+1}-x_i}= V(P_n) 
.
$$

This concludes the proof

Finally, the tightness of the first claimed inequality
is witnessed by the step function $f(x)=\pred{x>1/2}$,
and of the second inequality by $f(x)=x$. 

$\square$

\subsection{Proof of Theorem~\ref{thm:main}}
The claimed containments are immediate from Theorem~\ref{thm:containment};
only the separations remain to be shown.
The first of these is obvious: the step function 
has bounded variation but infinite strong average
\cite[Appendix I]{AshlagiGK24}.
We proceed with the second separation:
\begin{lemma}\label{strict}
There exists an  $f:[0,1]\rightarrow [0,1]$ such that $V(f)=\infty$ but $\wa{f}\le 2$.
\end{lemma}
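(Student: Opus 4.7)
The plan is to build $f$ as a sum of disjoint triangular pulses concentrated geometrically near $0$, chosen so that the pulse heights decay slowly enough to yield infinite variation but the pulse slopes grow rapidly enough that $m(\{\Lambda_f\ge t\}) = O(1/t)$. Concretely, for each $n\ge 1$ let $\phi_n$ be the symmetric triangular pulse of height $h_n = 1/n$ supported on the subinterval $J_n = [2^{-n},\, 2^{-n} + 2^{-n-1}]$ of $[0,1]$, and set $f = \sum_{n\ge 1} \phi_n$. The $J_n$'s are pairwise disjoint with gaps of length $2^{-n-1}$ between $J_n$ and $J_{n-1}$, and $f$ takes values in $[0,1]$.

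First I would observe $V(f)=\sum_n V(\phi_n)=\sum_n 2h_n=2\sum_n 1/n=\infty$, which handles the variation. For the weak bound, I would show that on a mild enlargement $I_n\supset J_n$ of length $O(2^{-n})$ containing $J_n$ and the adjacent gaps, $\Lambda_f(x)\le C\cdot 2^n/n$ for some absolute constant $C$. The interior bound $\Lambda_{\phi_n}(x)\le 2h_n/w_n=2^{n+1}/n$ is immediate from the triangular shape; the contribution to $\Lambda_f(x)$ from a pulse $\phi_m$ with $m\neq n$ is at most $(h_n+h_m)/\mathrm{dist}(x,J_m)$, which for $m>n$ is bounded by $2h_m/2^{-n}\le 2^{n+1}/n$ and for $m<n$ is bounded by $2h_m/2^{-m-1}\le 2^{m+2}/m$, both dominated by $2^n/n$ up to an absolute constant. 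Given this, let $n_0=n_0(t)$ be the least $n$ with $2^n/n\ge ct$. Then
\[
\{x:\Lambda_f(x)\ge t\}\subseteq \bigcup_{n\ge n_0} I_n, \qquad m\bigl(\{\Lambda_f\ge t\}\bigr)\le C'\sum_{n\ge n_0}2^{-n}\le \frac{2C'}{2^{n_0}}\le \frac{2C'}{c\,t\,n_0}.
\]
Multiplying by $t$ yields $t\cdot m(\{\Lambda_f\ge t\})=O(1)$, hence $\wa{f}\le C''$ for an absolute constant $C''$. Rescaling $f$ by the factor $2/C''$ (which still maps into $[0,1]$) gives the required bound $\wa{f}\le 2$ while keeping $V(f)=\infty$.

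The main technical obstacle is the slope bound on the enlargements $I_n$: a naive per-pulse union bound like $m\{x:\exists m,\, \mathrm{dist}(x,J_m)\le h_m/t\}\le \sum_m 2h_m/t$ is useless since $\sum h_m$ diverges. The resolution, sketched above, is that $\Lambda_f(x)$ involves a supremum rather than a sum, so for a given $x$ only the closest pulses matter; the geometric spacing of the $J_n$ ensures that these closest contributions are of the same order as the within-pulse slope $2^n/n$. Once the cross-pulse accounting is done, the remainder is the elementary level-set computation above followed by a single rescaling.
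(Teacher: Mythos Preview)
Your construction works, but it differs from the paper's. The paper uses a single piecewise-linear zigzag on the grid $x_n=1/n$, setting $f(1/n)=\sum_{k=1}^n(-1)^{k+1}/k$ and interpolating. Since $f$ takes values in $[0,1]$, any secant of slope $\ge t$ has $|x-x'|\le 1/t$; since the slope on $[1/(n+1),1/n]$ equals $n$, at least one of $x,x'$ must lie in an interval with index $\ge t$, forcing $x\le 2/t$. This yields $\wa{f}\le 2$ directly, with no rescaling.

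Your route---disjoint triangular pulses on dyadic intervals---is a legitimate alternative. The price is the cross-pulse accounting: for $x\in I_n$ you must check that secants to \emph{every} other pulse $J_m$ are also $O(2^n/n)$, which you do correctly in outline (the slips, e.g.\ writing $2h_m$ where $2h_n$ is meant for $m>n$, or claiming $\mathrm{dist}(x,J_m)\ge 2^{-n}$ where only $\ge 2^{-n-2}$ holds, are harmless since you work up to an absolute constant throughout). What you gain is modularity---each pulse is an independent building block, and the scheme generalizes easily. What the paper's zigzag gains is a one-line slope argument and the exact constant $2$, because the secant-versus-piece-slope comparison is immediate for a connected piecewise-linear graph. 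Both exploit the same mechanism: concentrate the large slopes near $0$ on supports whose measures are summable in $1/t$.
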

\begin{proof}
Let $f:[0,1]\rightarrow[0,1]$ be the piecewise linear function 
defined on $x_n=\frac1n$, $n\ge1$, by

$$f\left(\frac{1}{n}\right)=\sum\limits_{k=1}^n\frac{(-1)^{k+1}}{k}$$ 
and extended to $[0,1]$ by linear interpolation.

Clearly $V(f)=\sum\limits_{n=1}^\infty \frac{1}{n}=\infty$.
To bound 
$\wa{f}$,
note that any
$x,x'\in[0,1]$ 
witnessing
$\frac{|f(x)-f(x')|}{|x-x'|}\ge t$ 
also verify
$|x-x'|\le\frac{1}{t}$. 
Let $I_n$ denote the interval $\left[\frac{1}{n+1},\frac{1}{n}\right]$. 
If $\Lambda_f(x)\ge t$, then there is an $x'$ such that $\frac{|f(x)-f(x')|}{|x-x'|}\ge t$. 
Now either $x$ or $x'$ lies in $I_n$ for $n\ge t$. 
If $x\in I_n$ with $n\ge t$ then $x\le\frac{1}{t}$. If however $x'\in I_n$ for $n\ge t$ then $x'\le\frac{1}{t}$ and since $|x-x'|\le\frac{1}{t}$ we have $x\le\frac{2}{t}$. We conclude that $\Lambda_f(x)\ge t$ implies $x\le\frac{2}{t}$ and hence $\P\left(\Lambda_f(x)\ge t\right)\le\frac{2}{t}$;
this proves the claim.
\end{proof}

\paragraph{Remark.}Another function with this property is
$x\sin\frac{1}{x}$.

\subsection{Proof of Theorem~\ref{thm:averages-fat}}

\subsubsection{Proof that $\fat_\gamma(\Fw)=\infty$ whenever $\gamma\le\frac{L}{6}$}

Consider the partition of $[0,1]$ into segments $I_n=\left[x_{n+1},x_n\right]$ where $x_n=2^{-n}$. We define $f(x_n)=(-1)^n\gamma$. This specifies $f$ on all endpoints of $I_n$. For $x\in(x_{n+1},x_n)$ we define $f(x)=(-1)^n\frac{\gamma}{|I_n|}(x-x_{n+1})+(-1)^n\frac{\gamma}{|I_n|}(x-x_n)$, i.e. $f$ is piecewise linear with slope $(-1)^n4\gamma 2^n$ in $I_n$. Similarly 
to Lemma~\ref{strict}, 
if $\frac{|f(x)-f(x')|}{|x-x'|}\ge t$ then $|x-x'|\le\frac{2\gamma}{t}$. Now suppose $\Lambda_f(x)\ge t$, i.e. there exists $x'$ with $\frac{|f(x)-f(x')|}{|x-x'|}\ge t$. This implies that either $x$ or $x'$ lies in $I_n$ for some $n\ge\log{\frac{t}{4\gamma}}$ (the slope of the line connecting $x$ and $x'$ lies between the slopes of the segments containing $x,x'$). If $x\in I_n$ for some $n\ge\log\frac{t}{4\gamma}$, then $x\le x_n\le\frac{4\gamma}{t}$. If however $x'\in I_n$ for some $n\ge \log\frac{t}{4\gamma}$ then $x'\le \frac{4\gamma}{t}$ and since $|x-x'|\le\frac{2\gamma}{t}$ we have $x\le \frac{6\gamma}{t}$. Since $\Lambda_f(x)\ge t$ implies $x\le\frac{6\gamma}{t}$ we can conclude that $\wa{f}\le 6\gamma$. An immediate corollary is that $\barLw_L$ $\gamma$-shatters the infinite set $\{x_n\}_{n=1}^\infty$ for $\gamma\le\frac{L}{6}$ 
--- which is even stronger than having arbitrarily large $\gamma$ shattered sets.
$\square$
\subsubsection{Proof that 
$\fat_\gamma(\Fs)=1+\floor{\frac{L}{2\gamma}}$
    for $\gamma>0$}

The upper bound follows immediately from 
$V(f)\le 
\lip{f}
$. 
For the lower bound, take a $2\gamma/L$ packing $\set{x_1,\ldots,x_{1+\floor{\frac{L}{2\gamma}}}}$
of $[0,1]$. For labeling $y_i\in\set{-1,1}^n$ consider the linear interpolation of $\left(x_i, y_i\gamma\right)$, and observe that the interpolation $f$ satisfies $\Lambda_f(x)\le \frac{2\gamma}{2\gamma/L}=L$ everywhere.
$\square$


\end{document}